%
%
%
%
%
%
%
\documentclass[%
 aip,
 amsmath,amssymb,
reprint, onecolumn 
]{revtex4-1}

\usepackage{graphicx}
\usepackage{dcolumn}
\usepackage{bm}

\usepackage[utf8]{inputenc}
\usepackage[T1]{fontenc}
\usepackage{mathptmx}
\usepackage{etoolbox}

\makeatletter
\def\@email#1#2{%
 \endgroup
 \patchcmd{\titleblock@produce}
  {\frontmatter@RRAPformat}
  {\frontmatter@RRAPformat{\produce@RRAP{*#1\href{mailto:#2}{#2}}}\frontmatter@RRAPformat}
  {}{}
}%
\makeatother
\usepackage[letterpaper,top=2cm,bottom=2cm,left=3cm,right=3cm,marginparwidth=1.75cm]{geometry}
\usepackage{amsmath}
\usepackage{graphicx}
\usepackage[colorlinks=true, allcolors=blue]{hyperref}
\usepackage{amscd,amssymb,graphicx,amsmath,bm,mathdots}
\usepackage{multirow,array}
\usepackage{amscd,amssymb,graphicx,amsmath,bm,mathdots}
\usepackage{multirow,array}

\setlength{\multlinegap}{0pt}
\usepackage{arydshln}
\usepackage{arydshln}
\setlength{\dashlinegap}{2pt}

\newtheorem{proof}{Proof}
\newtheorem{thm}{Theorem}[section]

\newtheorem{cor}[thm]{Corollary}

\newtheorem{prop}[thm]{Proposition}
\newtheorem{dfn}[thm]{Definition}

\newtheorem{rem}{Remark}



\def \C{\mbox{$\mathbb C$}}
\def \R{\mbox{$\mathbb R$}}
\def \Z{\mbox{$\mathbb Z$}}


\def\and{\mbox{ \rm and }}

\input{xy}
\xyoption{all}

\begin{document}
\preprint{AIP/123-QED}
\title{Deformations of the five dimensional Heisenberg Lie algebra}
\author{Alice Fialowski\footnote{Corresponding author: Alice Fialowski}}
\email{alice.fialowski@gmail.com, fialowski@inf.elte.hu}
\affiliation{ Computer Algebra Department\\
 E\"{o}tv\"{o}s Lor\'{a}nd University HUNGARY,\\ H-1117 Budapest, P$\acute{a}$zm$\acute{a}$ny P$\acute{e}$ter s$\acute{e}$t$\acute{a}$ny 1/C
}%
\author{Ashis Mandal}%
\email{amandal@iitk.ac.in, ashismandal.r@gmail.com}
\affiliation{ Department of Mathematics and Statistics\\
 Indian Institute of Technology Kanpur\\
208016, India
}%


\date{\today}

\begin{abstract}
In this note we explicitly give all the equivalent classes of deformations of the $5$-dimensional Heisenberg Lie algebra $\mathfrak{h}_2$  over $\C$ or $\R$. We show that there are altogether $20$ infinitesimal deformations (families), $18$ of them being extendable to real deformations and $2$ of them are only infinitesimal. 
\footnote{AMS Mathematics Subject Classification (2020) : $17$B$56$, B$81$, B$99$. }
\end{abstract}

\keywords{ Heisenberg algebra, deformation, cohomology of Lie algebra}


\maketitle
\section{Heisenberg Lie algebras}

Heisenberg Lie algebras $\mathfrak{h}_n$ play an important role in both mathematical physics and mathematics. They are defined over $\R$ for every odd integer $2n+1$, with basis given by
\begin{center}
$\{e_1, e_2, \ldots , e_n, e_{n+1}, e_{n+2}, \ldots, e_{2n}, e_{2n+1}\}$,
\end{center}
and nonzero commutators are as follows
\begin{center}
$[e_1, e_{n+1}] = e_{2n+1},$

$[e_2, e_{n+2}] = e_{2n+1}$,

$\vdots$

$[e_n, e_{2n}] = e_{2n+1}$.
\end{center}
We also say that this Heisenberg Lie algebra has rank $n$. 
They are nilpotent Lie algebras, with centre being generated by $e_{2n+1}$ and are also viewed as a central extension of the commutative algebra $\R^{2n}$  by $\R$. In quantum mechanics they are typically used for $n = 1, 2, 3$, but also in the theory of vertex algebras with $n \in \Z$.

It is well-known that they can be represented by upper triangular matrices as follows: 

Consider two elements of the rank $n$ Heisenberg algebra: ${\bf{a}} = \sum_1^{2n+1}a_i e_i$ and  ${\bf{b}} = \sum_1^{2n+1}b_ie_i$. They can be represented by $(n+2)\times (n+2)$ triangular matrices.  Here, the basis elements $e_1,\cdots , e_{2n+1}$ are represented by

  {\small
\[
  e_{1} = \left(
    \begin{array}{@{} c c c c c c @{}}
      0&\multicolumn{0}{@{|}c}{1} & 0 &  \cdots &  0 & 0 \\
      \cdashline{2-5}
      0 & 0 &  0 & 0& 0& \multicolumn{1}{ @{|} c}{\ 0} \\
     0 & \vdots &  \vdots &   \vdots &0 & \multicolumn{1}{  @{|}c}{\ 0} \\
     0 & \vdots &  \vdots & \vdots & 0&  \multicolumn{1}{ @{|} c}{\ 0} \\
     0 & \vdots &  \vdots & \vdots & 0&  \multicolumn{1}{  @{|}c}{\ 0} \\
\cdashline{6-6}
     0 & 0 & 0 & 0 & 0 & 0
    \end{array}
  \right),\quad
  e_{2} = \left(
    \begin{array}{@{} c c c c c c @{}}
      0&  \multicolumn{0}{ @{|} c}{0} & 1 &  \cdots &  0 & 0 \\
      \cdashline{2-5}
      0 & 0 &  0 & 0& 0& \multicolumn{1}{ @{|} c}{\ 0} \\
     0 & \vdots &  \vdots &   \vdots &0 & \multicolumn{1}{ @{|} c}{\ 0} \\
     0 & \vdots &  \vdots & \vdots & 0&  \multicolumn{1}{  @{|}c}{\ 0} \\
     0 & \vdots &  \vdots & \vdots & 0&  \multicolumn{1}{ @{|}c}{\ 0} \\
\cdashline{6-6}
     0 & 0 & 0 & 0 & 0 & 0
    \end{array}
    \right), \ldots
    \medskip                                                                           
\]

\[
    e_{n} = \left(
    \begin{array}{@{} c c c c c c @{}}
      0&  \multicolumn{0}{ @{|}c}{0} & 0 &  \cdots & 1 & 0 \\
      \cdashline{2-5}
      0 & 0 &  0 & 0& 0& \multicolumn{1}{@{|}c}{\ 0} \\
     0 & \vdots &  \vdots &   \vdots &0 & \multicolumn{1}{@{|} c}{\ 0} \\
     0 & \vdots &  \vdots & \vdots & 0&  \multicolumn{1}{@{|} c}{\ 0} \\
     0 & \vdots &  \vdots & \vdots & 0&  \multicolumn{1}{@{|} c}{\ 0} \\
\cdashline{6-6}
     0 & 0 & 0 & 0 & 0 & 0
    \end{array}
  \right), \quad
  e_{n+1} = \left(
    \begin{array}{@{} c c c c c c @{}}
      0&  \multicolumn{0}{@{|} c}{0} & 0 &  \cdots &  0 & 0 \\
      \cdashline{2-5}
      0 & 0 &  0 & 0& 0& \multicolumn{1}{@{|} c}{\ 1} \\
     0 & \vdots &  \vdots &   \vdots &0 & \multicolumn{1}{@{|} c}{\ 0} \\
     0 & \vdots &  \vdots & \vdots & 0&  \multicolumn{1}{@{|} c}{\ 0} \\
     0 & \vdots &  \vdots & \vdots & 0&  \multicolumn{1}{@{|} c}{\ 0} \\
\cdashline{6-6}
     0 & 0 & 0 & 0 & 0 & 0
    \end{array}
  \right), \ldots
  \medskip
  \]
 \[
 e_{2n} = \left(
    \begin{array}{@{} c c c c c c @{}}
      0&  \multicolumn{0}{@{|} c}{0} & 0 &  \cdots &  0 & 0 \\
      \cdashline{2-5}
      0 & 0 &  0 & 0& 0& \multicolumn{1}{@{|} c}{\ 0} \\
     0 & \vdots &  \vdots &   \vdots &0 & \multicolumn{1}{@{|} c}{\ 0} \\
     0 & \vdots &  \vdots & \vdots & 0&  \multicolumn{1}{@{|} c}{\ 0} \\
     0 & \vdots &  \vdots & \vdots & 0&  \multicolumn{1}{@{|} c}{\ 1} \\
\cdashline{6-6}
     0 & 0 & 0 & 0 & 0 & 0
    \end{array}
    \right),\quad
    e_{2n+1} = \left(
    \begin{array}{@{} c c c c c c @{}}
      0&  \multicolumn{0}{@{|} c}{0} & 0 &  \cdots & 0 & 1 \\
      \cdashline{2-5}
      0 & 0 &  0 & 0& 0& \multicolumn{1}{@{|} c}{\ 0} \\
     0 & \vdots &  \vdots &   \vdots &0 & \multicolumn{1}{@{|} c}{\ 0} \\
     0 & \vdots &  \vdots & \vdots & 0&  \multicolumn{1}{@{|} c}{\ 0} \\
     0 & \vdots &  \vdots & \vdots & 0&  \multicolumn{1}{@{|} c}{\ 0} \\
\cdashline{6-6}
     0 & 0 & 0 & 0 & 0 & 0
    \end{array}
  \right) \quad
  \]
  }  
  \medskip
  We get
 \begin{center}
$[{\bf{a}}, {\bf{b}}] = \sum_1^n (a_ib_{i+n} - a_{i+n}b_i)e_{2n+1}$.
\end{center}
\medskip

It seems that the most studied case is the $3$-dimensional Heisenberg Lie algebra $\mathfrak{h}_1$ with basis elements $\{e_1, e_2, e_3\}$.  We also know the dimensions of its adjoint cohomology spaces following \cite{Ma}. Let $h^p$ be the dimension of $ H^p(\mathfrak{h}_n, \mathfrak{h}_n) $. Then

\begin{equation*}
 h^p =\begin{cases} 
1 & \text{if $p= 0$,}\\
~~\\
(2n+1) {2n+1 \choose p} - {2n+1 \choose p+1} - 2n {2n+1 \choose p-1} & \text{if $1\leq p\leq n$,}\\
~~\\
(2n+1)  [ {2n \choose n} - {2n \choose {n-2}} ] -  {2n \choose {n-1}} + {2n \choose {n-3}} & \text{if $p=n+1$,}\\
~~\\
2n  [ {2n \choose {p-1}} - {2n \choose {p+1}} ] -  {2n \choose {p}} + {2n \choose {p+2}} & \text{if $n+2 \leq p \leq 2n+1$}.
\end{cases}
\end{equation*}

In low dimensional cohomology spaces we also have explicit cocycles, see \cite{FP1}. For deformations we need to consider the second cohomology space $H^2(\mathfrak{h}_1, \mathfrak{h}_1)$ which is of dimension $5$ (see third formula for $n = 1$). We have $5$ explicit nonequivalent $2$-cocycles, as follows:
\begin{enumerate}
\item $\phi_1(e_2, e_3) = e_3;$
\item $\phi_2(e_1, e_2) = e_2, \phi_2(e_1,e_3) = -e_3;$
\item $\phi_3(e_1,e_2) = e_3;$
\item $\phi_4(e_1, e_3) = e_1;$
\item $\phi_5(e_1, e_3) = e_2.$
\end{enumerate}
These cocycles define $5$ nonequivalent infinitesimal deformations, $[-,-]_{\mathfrak{h}_1} + t \phi_i$ for $ i = 1, \cdots ,5$. They have no $t^2$ terms in the Jacobi identities written for $[-,-]_{\mathfrak{h}_1} + t \phi_i$. All are extendible, so they are real deformations, see \cite {FM}.
\medskip

Nothing is known about deformations of higher rank Heisenberg Lie algebras. The problem is to compute the second cohomology space with adjoint coefficients and to provide all the representative  cocycles. In this note, we completely describe deformations of the $5$-dimensional Heisenberg Lie algebra $\mathfrak{h}_2$ over $\R$ or $\C$.

\section{Preliminaries}

In this section we present a brief summary of cohomology and deformations of a Lie algebra. For details see \cite{NR, F}.

A Lie algebra is a vector space $\mathfrak{g}$  over a field $\mathbb{K}$, equipped with a $\mathbb{K}$-bilinear operation called Lie bracket $[-,-]: \mathfrak{g} \times  \mathfrak{g}\longrightarrow  \mathfrak{g}$ satisfying
\begin{itemize}
           \item Skew-symmetry: $[a,b] = -[b,a]$ for all $a,b \in  \mathfrak{g}$,
           \item Jacobi-identity: $[a,[b,c]]+[b,[c,a]]+[c,[a,b]] = 0$ for all $a,b,c \in  \mathfrak{g}$.
\end{itemize}

A representation of a Lie algebra $\mathfrak{g}$ or a module over $\mathfrak{g}$ is a vector space $V$ with a Lie algebra homomorphism, say $\rho: \mathfrak{g}\longrightarrow \mathcal{E}nd(V)$, where $\mathcal{E}nd(V)$ is the Lie algebra of endomorphisms of $V$ with the commutator Lie bracket operation.
     
Any Lie algebra $\mathfrak{g}$ is a module over itself, it is known as the adjoint representation of $\mathfrak{g}$. Here we consider the module $ V$ as the underlying vector space and define $\rho : \mathfrak{g} \longrightarrow  \mathcal{E}nd(\mathfrak{g})$ as $\rho (x)(y) = [x,y]$ for $x, y \in \mathfrak{g}$.
Let us recall Lie algebra cohomology with coefficients in a module.
\begin{dfn}
Suppose $\mathfrak{g}$ is a Lie algebra and $A$ is a module over $\mathfrak{g}$. Then a $q$-dimensional cochain of the Lie algebra $\mathfrak{g}$ with coefficients in $A$ is a skew -symmetric $q$-linear map on $\mathfrak{g}$ with values in $A$; the space of all such cochains is denoted by $C^q(\mathfrak{g};A)$. Thus, $C^q(\mathfrak{g};A)= Hom_\mathbb{K}(\Lambda^q \mathfrak{g},A)$.  The differential $$d=d_q:C^q(\mathfrak{g};A)\longrightarrow C^{q+1}(\mathfrak{g};A)$$ is the $\mathbb{K}$-linear map given by the formula
\begin{equation*}
 \begin{split}
dc(g_1,\cdots,g_{q+1})=&\sum_{1\leq s<t\leq q+1}(-1)^{s+t-1}c([g_s,g_t],g_1,\cdots,\hat{g}_s,\cdots,\hat{g}_t,\cdots,g_{q+1})\\
&+\sum_{1\leq s\leq q+1 }(-1)^{s}[g_s,c(g_1,\cdots,\hat{g}_s,\cdots,g_{q+1})],
 \end{split}
\end{equation*}
where $c \in C^q(\mathfrak{g};A)$ and $g_1,\cdots,g_{q+1} \in \mathfrak{g}$.

Then $(C^{*}(\mathfrak{g};A),~ d)$ is a cochain complex called the Chevally-Eilenberg complex of $\mathfrak{g}$ while the corresponding cohomology is referred to as the cohomology of the Lie algebra $\mathfrak{g}$ with coefficients in $A$ and is denoted by $H^q(\mathfrak{g};A)$. For $A= \mathfrak{g}$, with the adjoint action as mentioned before, the cohomology is denoted by $H^*( \mathfrak{g}; \mathfrak{g}).$

\end{dfn}
\begin{dfn}
A formal $1$-parameter deformation or simply a \emph{deformation} of ($\mathfrak{g}$,[-,-]) is a Lie algebra structure $[-,-]_t$ on the $\mathbb{K}[[t]]$-module $\mathfrak{g}[[t]]$, such that    
      $$[x,y]_t = [x,y] + \sum_{n\geq1} t^n \mu_n(x,y)$$
       $\mbox{for all}~ x,y \in \mathfrak{g}$.  Here $\mu_n: \mathfrak{g}\otimes\mathfrak{g} \longrightarrow \mathfrak{g}$  are skew-symmetric linear maps for $n\geq 1$, in turn $\mu_n$ is a $2$-cochain in $C^2(\mathfrak{g},\mathfrak{g})$.
\end{dfn}       
       
The Lie algebra structure $[-,-]_t$ is a \emph{deformation} of $(\mathfrak{g}, [-,-]) $ provided the following equalities hold.
\begin{equation}\label{condition}
\begin{split}
&[a, b]_t = - [b,a]_t~~\mbox{for all}~~a,b \in  \mathfrak{g};\\
& [a,[b,c]_t]_t+[b,[c,a]_t]_t+[c,[a,b]_t]_t = 0 ~~\mbox{for all}~~a,b, c \in  \mathfrak{g}.
\end{split}
\end{equation}
Now expanding both sides of the above equations in  (1) and collecting coefficients of $t^n$, we see that (1) is equivalent to the system of equations
\begin{equation}\label{condition-n}
\begin{split}
~\sum_{i+j=n} \{ \mu_i(x,\mu_j(y,z)) +\mu_i(y, ~\mu_j(z, x)) +
\mu_i(z,~ \mu_j(x,y)) \}=0 ~~\mbox{for}~ x,y,z \in \mathfrak{g}.
\end{split}
\end{equation}
A deformation is \emph{of order $n$} if it satisfies the Jacobi identities up to the $t^n$-terms. A deformation $[-,-]_t$ is said to be a \emph{real} deformation if it satisfies the Jacobi identities for any $t^n$-term.
\begin{rem}
For $n=0$, the condition is equivalent to  the  Jacobi identity of the Lie bracket in $\mathfrak{g}$.
For $n=1$, it is equivalent to $ d \mu_1=0$, in other words $\mu_1$ is a $2$-\emph{cocycle} in $C^2(\mathfrak{g},\mathfrak{g})$. In general, for $n\geq 2$, $\mu_n$ is just a $2$-cochain in $C^2(\mathfrak{g},\mathfrak{g})$. 
\end{rem}

\begin{dfn}
The $2$-cocycle $\mu_1$ is called infinitesimal part of the deformation $[-,-]_t=[-,-]+ t\mu_1$, and we say that $\mathfrak{g}_t = \mathfrak{g} + t\mu_1$ is an \emph{infinitesimal deformation} of the Lie algebra $\mathfrak{g}$. In other words, an infinitesimal deformation is a deformation of order $1$, parameterized by $K[t]/(t^2)$. 
\end{dfn}
  \begin{dfn}
  Suppose $[-,-]_t$ and $ [-,-]^{\prime} _t $ are two deformations of a given Lie algebra $\mathfrak{g}$.
      A formal isomorphism is given by  $\Phi_t=\sum_{i\geq0}\phi_i t^i$, satisfying $\Phi_t [x,y]^{\prime}_t$ = $[\Phi_t(x),\Phi_t(y)]_t$ for all $x,y \in \mathfrak{g}$;
 where $\phi_i: \mathfrak{g}\longrightarrow \mathfrak{g}$ are linear maps with $\phi_0=Id_{\mathfrak{g}}$.  In this case we say that the two deformations are equivalent.
\end{dfn}    
   \begin{dfn} 
  Any deformation of a Lie algebra, which is equivalent to the deformation $\mu_0 = [-,-]$ is said to be a \emph{trivial deformation}.   
\end{dfn}    
  \begin{prop}
An infinitesimal deformation of ($\mathfrak{g}$,[-,-])  is given by $\mu_t= [-,-] + t \mu_1 $ such that $\mu_t$ is a deformation of $\mathfrak{g}$ modulo $t^2$ that is, $\mu_t$ satisfies (\ref{condition-n}) for $n =0, 1$.
\end{prop}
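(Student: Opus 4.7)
The plan is to unpack the definition of deformation modulo $t^2$ and match it term-by-term with the system (\ref{condition-n}) for $n=0,1$. Since the statement is essentially a direct consequence of the expansion procedure introduced just above Remark 1, the proof is a short bookkeeping exercise rather than a substantive result; nevertheless, it is worth recording precisely which identities survive the truncation.

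First I would set $\mu_0 = [-,-]$ and $\mu_t = \mu_0 + t\mu_1$, then substitute this expression into both sides of the Jacobi identity
\[
[x,[y,z]_t]_t + [y,[z,x]_t]_t + [z,[x,y]_t]_t = 0
\]
and expand as a polynomial in $t$. Working in $\mathbb{K}[t]/(t^2)$, only the $t^0$ and $t^1$ coefficients matter, since any terms of order $t^2$ or higher are killed. The $t^0$ coefficient reproduces the Jacobi identity for $\mu_0$, which is equation (\ref{condition-n}) with $n=0$ and holds because $\mathfrak{g}$ is a Lie algebra. The $t^1$ coefficient gives
\[
\mu_0(x,\mu_1(y,z)) + \mu_1(x,\mu_0(y,z)) + \mu_0(y,\mu_1(z,x)) + \mu_1(y,\mu_0(z,x)) + \mu_0(z,\mu_1(x,y)) + \mu_1(z,\mu_0(x,y)) = 0,
\]
which is precisely equation (\ref{condition-n}) with $n=1$.

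Next I would observe, invoking Remark 1, that this $t^1$-equation coincides with the Chevalley--Eilenberg cocycle condition $d\mu_1 = 0$, so $\mu_1 \in C^2(\mathfrak{g},\mathfrak{g})$ is a $2$-cocycle. Conversely, given any $2$-cocycle $\mu_1$, reading these two identities backwards shows that $\mu_t = [-,-] + t\mu_1$ satisfies the Jacobi identity modulo $t^2$, i.e.\ it is a deformation of order $1$ in the sense of Definition 2.4. Combining both directions yields the stated equivalence. No step here presents a genuine obstacle; the only care needed is to separate the terms of the expansion cleanly by degree in $t$ and to verify that the skew-symmetry $[x,y]_t = -[y,x]_t$ automatically follows from skew-symmetry of $\mu_0$ and $\mu_1$, so no extra condition is imposed.
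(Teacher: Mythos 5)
Your proof is correct and follows exactly the route the paper intends: the paper leaves this proposition unproved because it is immediate from the expansion of the Jacobi identity into the system (\ref{condition-n}) described just before Remark 1, and your argument simply carries out that expansion truncated modulo $t^2$, identifying the $t^1$ coefficient with the cocycle condition $d\mu_1=0$. Nothing further is needed.
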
     
 The next result shows that infinitesimal deformations are characterised by the second cohomology space of a Lie algebra.   
 \begin{prop}
 Equivalent deformations' infinitesimal parts $\mu_1$ and $\mu_2$ differ only in a coboundary, so the cohomology class of the infinitesimal part of a deformation is uniquely determined by a cohomology class of $H^2(\mathfrak{g},\mathfrak{g})$, and different cohomology classes determine  nonequivalent deformations. 
 \end{prop}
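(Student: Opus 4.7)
The plan is to expand the defining equation of a formal isomorphism order by order in $t$ and read off the consequence at first order. Suppose $[-,-]_t = [-,-] + t\mu_1 + t^2 \mu_2 + \cdots$ and $[-,-]'_t = [-,-] + t\mu'_1 + t^2 \mu'_2 + \cdots$ are two equivalent deformations of $\mathfrak{g}$, with formal isomorphism $\Phi_t = \id + t\phi_1 + t^2 \phi_2 + \cdots$. Substituting these expansions into the identity $\Phi_t[x,y]'_t = [\Phi_t(x), \Phi_t(y)]_t$ produces an equality of formal power series in $t$; the constant term gives back the Jacobi bracket on $\mathfrak{g}$ and carries no new information.

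Collecting the coefficient of $t^1$ on each side, one obtains
\begin{equation*}
\mu'_1(x,y) + \phi_1[x,y] \;=\; \mu_1(x,y) + [\phi_1(x), y] + [x, \phi_1(y)],
\end{equation*}
so that $\mu'_1(x,y) - \mu_1(x,y) = [\phi_1(x), y] + [x, \phi_1(y)] - \phi_1[x,y]$. Applying the Chevalley--Eilenberg differential from Definition 2.1 to the $1$-cochain $\phi_1 \in C^1(\mathfrak{g};\mathfrak{g})$ yields
\begin{equation*}
d\phi_1(x,y) \;=\; \phi_1([x,y]) - [x, \phi_1(y)] + [y, \phi_1(x)] \;=\; \phi_1[x,y] - [x, \phi_1(y)] - [\phi_1(x), y],
\end{equation*}
so $\mu'_1 - \mu_1 = -\,d\phi_1$ is a coboundary. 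Hence $[\mu_1]\in H^2(\mathfrak{g};\mathfrak{g})$ is an invariant of the equivalence class of the deformation, and the second assertion is the immediate contrapositive: two deformations whose infinitesimal parts represent distinct cohomology classes cannot be equivalent.

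The argument is essentially bookkeeping; the only point requiring care is matching the expression $[\phi_1(x), y] + [x, \phi_1(y)] - \phi_1[x,y]$ with the sign convention of the Chevalley--Eilenberg differential as stated in the preliminaries. Observe that no higher-order components $\phi_i$ or $\mu_i$ with $i\geq 2$ enter the computation, which explains why the cohomological invariant lives entirely in the first-order data of the deformation.
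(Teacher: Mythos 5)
Your proof is correct and follows essentially the same route as the paper: expand the formal isomorphism identity, compare coefficients of $t$, and recognize $\mu'_1-\mu_1$ as $\pm d\phi_1$. You supply more detail than the paper (which stops at "comparing coefficients of $t$, we get $\mu_1-\mu'_1=\delta\phi_1$"), and your sign check against the Chevalley--Eilenberg differential as defined in the preliminaries is accurate.
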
  
   
 \begin{proof}
 Let $[-,-]_t$ and $ [-,-]^{\prime} _t $ be two deformations of a given Lie algebra $\mathfrak{g}$  where $[x,y]_t = [x,y] + \sum_{n\geq1} t^n \mu_n(x,y)$ and 
  $[x,y]^\prime_t = [x,y] + \sum_{n\geq1} t^n \mu^\prime_n(x,y)$ $\mbox{for all}~ x,y \in \mathfrak{g}$.
   Suppose $\Phi_t: (\mathfrak{g}[[t]], [-,-]_t) \longrightarrow ( \mathfrak{g}[[t]], [-,-]^\prime_t)$ is an equivalence. Then 
   $$\Phi_t [x,y]_t = [\Phi_t(x),\Phi_t(y)]^{\prime}_t $$ for all $x,y\in \mathfrak{g}$.  Expanding this identity and comparing coefficients of $t$, we get $\mu_1-\mu^\prime_1=\delta \phi_1$. 
\end{proof}
\begin{dfn}
 A Lie algebra is said to be \emph{rigid} if every deformation is trivial.
 \end{dfn}
 
\begin{cor}
If $H^2(\mathfrak{g},\mathfrak{g})=0$ then $\mathfrak{g}$ is a rigid Lie algebra.
\end{cor}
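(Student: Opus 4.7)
The plan is to show that every formal deformation $[-,-]_t = [-,-] + \sum_{n\geq 1} t^n\mu_n$ of $\mathfrak{g}$ is equivalent to the trivial one. I construct the required formal isomorphism $\Phi_t$ inductively, removing the lowest-order perturbation at each stage by solving a cohomological obstruction that vanishes because $H^2(\mathfrak{g},\mathfrak{g}) = 0$.

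For the first reduction, from equation (\ref{condition-n}) at $n=1$ together with the Remark, the infinitesimal $\mu_1$ is a 2-cocycle. Since $H^2(\mathfrak{g},\mathfrak{g}) = 0$, we may write $\mu_1 = d\phi_1$ for some $\phi_1 \in \mathrm{Hom}_{\mathbb{K}}(\mathfrak{g},\mathfrak{g})$. Setting $\Phi_t^{(1)} = \mathrm{Id} + t\phi_1$ and transporting the bracket via $[x,y]_t^{(1)} = (\Phi_t^{(1)})^{-1}[\Phi_t^{(1)}(x), \Phi_t^{(1)}(y)]_t$, a direct expansion shows the $t$-coefficient becomes $\mu_1 - d\phi_1 = 0$, so $[-,-]_t^{(1)} = [-,-] + t^2\mu_2' + O(t^3)$. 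Iterating: once we have reduced to $[-,-]_t^{(k-1)} = [-,-] + t^k \mu_k' + O(t^{k+1})$ for some $k \geq 2$, expanding (\ref{condition-n}) at order $t^k$ kills every mixed term, since the intermediate perturbations $\mu_i'$ with $1 \leq i < k$ all vanish; what remains is precisely the cocycle condition $d\mu_k' = 0$. Again $H^2 = 0$ yields $\mu_k' = d\phi_k$, and applying $\Phi_t^{(k)} = \mathrm{Id} + t^k\phi_k$ produces an equivalent deformation starting at order $t^{k+1}$ or higher.

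Finally, the infinite composition $\Phi_t := \cdots \circ \Phi_t^{(k)} \circ \cdots \circ \Phi_t^{(1)}$ is well defined in the $t$-adic topology on $\mathrm{End}(\mathfrak{g})[[t]]$, because each $\Phi_t^{(k)}$ differs from the identity only in degrees $\geq t^k$, so the partial compositions stabilize modulo $t^N$ for every $N$. This $\Phi_t$ provides a formal isomorphism from $[-,-]_t$ to $[-,-]$, proving rigidity. The main conceptual point is the inductive observation that once all lower-order perturbations have been killed, the Jacobi identity at order $t^k$ collapses to $d\mu_k' = 0$, so the hypothesis $H^2(\mathfrak{g},\mathfrak{g}) = 0$ is exactly what lets the induction continue; everything else is formal power series bookkeeping.
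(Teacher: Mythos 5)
Your proof is correct, and it is the standard Nijenhuis--Richardson/Gerstenhaber induction: kill the lowest-order term $\mu_k'$ by noting that the order-$t^k$ Jacobi identity reduces to $d\mu_k'=0$ once all lower perturbations vanish, write $\mu_k'=d\phi_k$ using $H^2(\mathfrak{g},\mathfrak{g})=0$, gauge it away with $\mathrm{Id}+t^k\phi_k$, and pass to the limit in the $t$-adic topology. The paper itself offers no proof of this corollary; it is stated as an immediate consequence of the preceding proposition, which only establishes that the infinitesimal part of a deformation is well defined in $H^2$ and hence, under the vanishing hypothesis, is a coboundary. That proposition alone only trivializes the deformation to first order, so your argument actually supplies the inductive step and the convergence of the infinite composition that the paper leaves implicit. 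The one point worth stating explicitly is that after each gauge transformation the transported bracket is again a Lie bracket over $\mathbb{K}[[t]]$ (so that the Jacobi identity can be invoked at the next order); this holds because each $\Phi_t^{(k)}$ is invertible as a formal power series, as you implicitly use. Signs in $\mu_1 - d\phi_1$ match the paper's convention for the Chevalley--Eilenberg differential on $1$-cochains, so no adjustment is needed there.
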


\section{Possible nonequivalent deformations of $\mathfrak{h}_2$}

Infinitesimal deformations of a Lie algebra $\mathfrak{g}$ are in one-to-one correspondence with the elements of the second cohomology space $H^2(\mathfrak{g}, \mathfrak{g})$. In \cite{FP2}, the moduli space of $5$-dimensional complex Lie algebras was described (see also \cite{SW}). There are single Lie algebras, and $2$, $3$ and $4$ parameter families. In order to check the possible deformations of the Heisenberg Lie algebra $\mathfrak{h}_2$, we found several possibilities for deformations of $\mathfrak{h}_2$ with basis $\{e_1, \cdots, e_5\}$. The result in \cite{Ma, FP2} is that dim $H^2(\mathfrak{h}_2, \mathfrak{h}_2)= 20$. 
\smallskip

Recall that the nontrivial brackets in $\mathfrak{h}_2$ are given by
\begin{center}
$[e_1, e_3] = e_5, ~~~ [e_2, e_4] = e_5$.
\end{center}
\smallskip

The classification of $5$-dimensional real Lie algebras are known \cite{M1, M2} (1963),  also see \cite{P} (1976). Complex $5$-dimensional Lie algebras are less, as some of them have  two non-isomorphic real forms. Their moduli space can be found in \cite{FP2}. Our method is as follows. 

We start with the complex Heisenberg Lie algebra $\mathfrak{h}_2$ with basis $\{e_1, \cdots, e_5\}$. Note that the specific property of $\mathfrak{h}_2$ is that the nonzero brackets are for different elements, and the resulting element is the fifth one for both brackets. So we looked for such cases. With a change of basis by permutation, from the list of nonequivalent $5$-dimensional Lie algebras in Table 3 of \cite{FP2} we have the following $8$ candidates denoted  by $d_i$ for $i=1, \cdots, 8$, for possible deformations of $\mathfrak{h}_2$ (those which contain some permutation of the elements  the Heisenberg nonzero brackets). We know that infinitesimal deformations are in one-to-one correspondence with the cohomology classes of $H^2(\mathfrak{h}_2,\mathfrak{h}_2)$. So we have 
 $d_i= [-,-]_{\mathfrak {h}_2}+ t \phi_i$       for $i=1, \cdots, 8$.
 In each case one has to check that the extra nonzero brackets, which define the $2$-cochain $\phi_i$ of $\mathfrak{h}_2$, is a cocycle or not (we need $d\phi_i =0$ for $i=1, \cdots, 8$).
 
 \smallskip 
 First we consider $2$-cocycles $\phi_i$ for the Lie algebra $\mathfrak {h}_2$ which are obtained without involving parameters :
\subsection {} The $2$-cochain $\phi_1$ for the algebra $d_1$ given by $[-,-]_{\mathfrak {h_2}}+ \phi_1$ ( denoted as $d_7$ in \cite{FP2})
\begin{equation*}
 \begin{split}
&\phi_1 (e_1, e_3 ) =   2e_2 - 2 e_3 ; \\
&\phi_1(e_1, e_4 ) = e_3 ; \\
&\phi_1(e_2, e_4 ) = e_2; \\
&\phi_1 (e_3, e_4 ) =  2 e_3 - e_2.
 \end{split}
\end{equation*}
\subsection {}  The $2$-cochain $\phi_2$ for the algebra {$d_2$} (denoted as $d_{10}$ in \cite{FP2})
\begin{equation*}
 \begin{split}
&\phi_2 (e_1, e_3 ) = e_1 ; \\
&\phi_2 (e_1, e_4 ) = - e_2 ; \\
&\phi_2 (e_2, e_3 ) = 2 e_2 ; \\
&\phi_2 (e_3, e_4 ) = - e_4\\
&\phi_2 (e_3, e_5 ) = -3e_5 .
\end{split}
\end{equation*}
\subsection {}  The $2$-cochain $\phi_3$ for the algebra {$d_3$} ( denoted as $d_{17}$ in \cite{FP2})
\begin{equation*}
 \begin{split}
&\phi_3 (e_1, e_4 ) = e_1 ; \\
&\phi_3 (e_2, e_4 ) = e_2 ; \\
&\phi_3 (e_3, e_4 ) = e_3 ; \\
&\phi_3 (e_4, e_5 ) = -2 e_5.
\end{split}
\end{equation*} 
Note that $\phi_1, \phi_2, \phi_3$ are all $2$-cocycles.
\medskip

Now we consider the two-parameter families of $2$-cocycles for the Lie algebra $\mathfrak {h}_2$.

\subsection {}  The $2$-cochain $\phi_4(p:q)$ for the family of algebras {$d_4(p:q)$} ( denoted as $d_9(p:q)$ in \cite{FP2})
\begin{equation*}
 \begin{split}
& \phi_4(e_1, e_4)= (p+q)e_1 ;\\
&\phi_4(e_2, e_3)= - e_1 ;\\
& \phi_4(e_2, e_4)=  q e_2 ;\\
&\phi_4(e_3, e_4)= e_2+ p e_3 ;\\
&\phi_4(e_4, e_5)=  -(2p+q)e_5.
\end{split}
\end{equation*}
The map $\phi$ is a 2-cocycle for arbitrary $p$ and $q$.

\subsection {}  The $2$-cochain $\phi_5(p:q)$ for the family of algebras {$d_5(p:q)$} (denoted as $d_{14}(p:q)$ in \cite{FP2})
\begin{equation*}
 \begin{split}
& \phi_5(e_1, e_3)= p e_1 ;\\
& \phi_5(e_2, e_3)=  e_1 + q e_2 +e_4;\\
&\phi_5(e_3, e_4)= - p e_4 ;\\
&\phi_5(e_3, e_5)= - (p+q)e_5.
\end{split}
\end{equation*}
The map $\phi_5$ is a $2$-cocycle for arbitrary $p$ and $q$.

\subsection {}  The $2$-cochain $\phi_6(p:q)$ for the family of algebras {$d_6(p:q)$ (denoted as $d_{15}(p:q)$ in \cite{FP2})

\begin{equation*}
 \begin{split}
& \phi_6(e_1, e_3)= p e_1 ;\\
& \phi_6(e_2, e_3)=  e_1 + q e_2;\\
&\phi_6(e_3, e_4)= - q e_4 ;\\
&\phi_6(e_3, e_5)=  -2q~e_5.
\end{split}
\end{equation*}
The map $\phi_6$ is a $2$-cocycle for arbitrary $p$ and $q$. 

\medskip

Next we consider the three-parameter families of $2$-cocycles for the Lie algebra $\mathfrak {h}_2$:
\subsection { }  The $2$-cochain $\phi_7(p:q:r)$ for the family of algebras $d_7(p:q:r)$ ( denoted as $d_{12}(p:q:r)$ in \cite{FP2})
\begin{equation*}
\begin{split}
&\phi_7 (e_1, e_3) = pe_1;\\
&\phi_7 (e_2, e_3 ) = e_1 + q e_2 ;\\
&\phi_7 (e_3, e_4 ) = - e_2 - r e_4 ;\\
&\phi_7 (e_3, e_5 ) = -( q + r  ) e_5 .
\end{split}
\end{equation*}
The map $\phi_7$ is a $2$-cocycle for arbitrary $p$, $q$ and $r$. 
\subsection { }  The $2$-cochain $\phi_8(p:q:r)$ for the family of algebras {$d_8(p:q:r)$} ( denoted as $d_5(p:q:r))$ in \cite{FP2})
\begin{equation*}
 \begin{split}
&\phi_8 (e_1, e_3 ) = p e_1 ;\\
&\phi_8 (e_1, e_4 ) = (r- q) e_5 ;\\
&\phi_8 (e_2, e_3 ) = e_1 + q e_2 ;\\
&\phi_8 (e_2, e_4 ) = r e_1 - r (p - q ) e_2;\\
&\phi_8 (e_4, e_5 ) = p(r - q) e_5.
\end{split}
\end{equation*}
The map $\phi_8$ is a $2$-cocycle if and only if $q=r$ or $p=0$.
\medskip

We note that all these Lie algebras (families) are solvable. 

\section {Non-isomorphic complex/real deformations of $\mathfrak{h_2}$}

In this section we decide which of these $8$ cocycles give nonequivalent infinitesimal deformations of $\mathfrak h_2$, and among those, which are extendible (no higher order $t$-terms show up in the Jacobi identities). For this, one has to check the Jacobi identity for the modified Lie bracket $ \mu_t(d_i)= [-,-]_{\mathfrak h_2} + t  \phi_i$ for $i=1, \cdots, 8$. If it satisfies the Jacobi identity, then this resulting infinitesimal deformation is extendible and gives a real deformation. If there are higher order $t$-terms then the deformation is only infinitesimal, and cannot be extended to the second order deformation.

\medskip

{\bf{a)} Single deformations}
\medskip

Here $\mu_t(d_1), \mu_t(d_2)$ and $\mu_t(d_3)$ are nonequivalent real deformations of $\mathfrak{h}_2$, as  no $t^2$-term or higher order terms appear in the Jacobi identities.

\begin{thm}

The cocycle $\phi_1$ gives the following infinitesimal and real deformation 
\begin{equation*}
\mu_t(d_1)  =\begin{cases} 
&[e_1, e_3]_t = e_5 + t (2 e_2 - 2 e_3)\\
&[e_1, e_4]_t = t e_3\\
&[e_2, e_4]_t = e_5 + t e_2\\
&[e_3, e_4]_t = t (2 e_3 - e_2)
\end{cases}
\end{equation*}
The cocycle $\phi_2$  gives the following infinitesimal, and real deformation 
\begin{equation*}
\mu_t(d_2)  =\begin{cases} 
&[e_1, e_3]_t = e_5 + t e_1\\
&[e_1, e_4]_t =  - t e_2\\
&[e_2, e_3]_t = 2t e_2\\
&[e_2, e_4]_t = e_5\\
&[e_3, e_4]_t = -t e_4\\
&[e_3, e_5]_t = -3t e_5
\end{cases}
\end{equation*}
The cocycle $\phi_3$  gives  the following infinitesimal and real deformation 
\begin{equation*}
\mu_t(d_3)  =\begin{cases} 
&[e_1, e_3]_t = e_5\\
&[e_1, e_4]_t = t e_1\\
&[e_2, e_4]_t = e_5 + t e_2\\
&[e_3, e_4]_t = t e_3\\
&[e_4, e_5]_t = - 2t e_5
\end{cases}
\end{equation*}

\end{thm}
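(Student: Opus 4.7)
The plan is to verify, for each $i \in \{1, 2, 3\}$, that the bracket $\mu_t(d_i) = [-,-]_{\mathfrak{h}_2} + t\phi_i$ satisfies the Jacobi identity exactly, not merely modulo $t^2$. Expanding the Jacobiator of $\mu_t$ and collecting powers of $t$ yields precisely the system (\ref{condition-n}) with $\mu_1 = \phi_i$ and $\mu_n = 0$ for $n \geq 2$. The $n=0$ equation is the Jacobi identity of $\mathfrak{h}_2$ itself; the $n=1$ equation is the cocycle condition $d\phi_i = 0$, already verified in Section 3; and the equations for $n \geq 3$ are vacuous because $\mu_j = 0$ for $j \geq 2$. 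The only nontrivial obligation is therefore the $n=2$ equation, which reduces to the quadratic Nijenhuis--Richardson identity
\[
\phi_i(\phi_i(x,y), z) + \phi_i(\phi_i(y,z), x) + \phi_i(\phi_i(z,x), y) = 0
\]
for all $x, y, z \in \mathfrak{h}_2$.

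The body of the argument is then a finite case analysis. For each $i$, it suffices to test the identity on basis triples $(e_a, e_b, e_c)$ with $a<b<c$, and the enumeration is short because each $\phi_i$ is supported on only a handful of pairs and has image in a low-dimensional subspace. For $\phi_1$ the image lies in $\mathrm{span}(e_2, e_3)$, and $\phi_1(e_2, \cdot)$, $\phi_1(e_3, \cdot)$ are nonzero only on pairs involving $e_1$ or $e_4$, sharply limiting the candidate nonzero terms. For $\phi_2$ and $\phi_3$ the role of $e_3, e_4, e_5$ is likewise constrained, so in each case one checks that the surviving terms cancel cyclically. Once this quadratic condition is confirmed, $\mu_t(d_i)$ is a genuine real deformation.

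Non-equivalence of the three deformations is a separate remark: at a generic value of $t$ the deformed algebra $\mu_t(d_i)$ represents the isomorphism class labeled $d_7$, $d_{10}$, $d_{17}$ respectively in Table 3 of \cite{FP2}, and these three classes are pairwise non-isomorphic in the moduli space of $5$-dimensional Lie algebras. Hence the deformations are inequivalent, as claimed in the discussion preceding the theorem.

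The main obstacle is purely bookkeeping: the quadratic condition $\phi_i \circ \phi_i + \text{cyclic} = 0$ has to be checked on all relevant triples, and the cancellations must be tracked carefully. No conceptual difficulty arises beyond the cocycle verifications already carried out in Section 3, and the sparsity of each $\phi_i$ keeps the calculation tractable.
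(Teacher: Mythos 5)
Your proposal is correct and follows essentially the same route as the paper: the paper simply asserts that no $t^2$-term or higher appears in the Jacobi identities for $[-,-]_{\mathfrak{h}_2}+t\phi_i$, which is exactly the quadratic condition $\phi_i(\phi_i(x,y),z)+\phi_i(\phi_i(y,z),x)+\phi_i(\phi_i(z,x),y)=0$ you reduce to and check on basis triples. Your treatment is in fact more explicit than the paper's, both in isolating the $n=2$ obligation and in justifying non-equivalence via the distinct isomorphism classes in the moduli space of \cite{FP2}.
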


Obviously, these three deformations are nonequivalent.

\medskip

{\bf{b) 2-parameter deformations}}
\medskip
\begin{thm}
The family of cocycles $\phi_4(p,q)$ gives the following infinitesimal and real family of deformations
\begin{equation}\label{d4}
\mu_t(d_4)(p:q) =\begin{cases} 
&[e_1, e_3]_t = e_5\\
&[e_1, e_4]_t = (p + q) t e_1\\
&[e_2, e_3]_t = -t e_1\\
&[e_2, e_4]_t = e_5 + q t e_2\\
&[e_3, e_4]_t = t e_2 + pt e_3\\
&[e_4, e_5]_t = - (2p +q)t e_5
\end{cases}
\end {equation}
Thus the family of cocycles $\phi_4(p,q)$ forms a $3$-dimensional subspace in $H^2(\mathfrak{h}_2, \mathfrak{h}_2)$.
\end{thm}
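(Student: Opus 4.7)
The plan is to prove the theorem in two stages, mirroring the two assertions: first, that $\mu_t(d_4)(p:q)$ defines a genuine real deformation for every $(p,q)$; second, that varying $(p,q)$ sweeps out a $3$-dimensional subspace of $H^2(\mathfrak{h}_2,\mathfrak{h}_2)$.

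For the first stage I would expand the deformed Jacobi identity for $\mu_t = [-,-]_{\mathfrak{h}_2} + t\phi_4$ and collect powers of $t$. Because only $\mu_0$ and $\mu_1 = \phi_4$ are nonzero, equation (\ref{condition-n}) gives nontrivial conditions only in degrees $n=0,1,2$. The $n=0$ condition is the Jacobi identity of $\mathfrak{h}_2$; the $n=1$ condition is $d\phi_4 = 0$, which is already asserted in the preceding section. So the whole problem collapses to the quadratic condition
\begin{equation*}
\phi_4(x,\phi_4(y,z)) + \phi_4(y,\phi_4(z,x)) + \phi_4(z,\phi_4(x,y)) = 0
\end{equation*}
for all triples $(x,y,z)$ of basis vectors. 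By skew-symmetry one only checks the $\binom{5}{3}=10$ unordered triples, and most are immediately trivial: $\phi_4$ is supported on pairs involving $e_3,e_4$ or $(e_4,e_5)$, and its image lies in $\operatorname{span}(e_1,e_2,e_3,e_5)$, so any triple containing $e_5$ together with a vector on which $\phi_4(e_5,\cdot)=0$ cancels out immediately, as does $(e_1,e_2,e_5)$. The genuinely substantive cases are $(e_1,e_2,e_3)$, $(e_1,e_2,e_4)$, $(e_1,e_3,e_4)$, $(e_2,e_3,e_4)$ and the three triples involving $(e_4,e_5)$. Each is a short polynomial identity in $p,q$ that vanishes identically; for instance, for $(e_2,e_3,e_4)$ one gets the combination $p(-e_1) - q(e_1) + (p+q)e_1 = 0$. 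I expect every case to close by a similar one-line cancellation, with no $t^2$ obstruction surviving, so the infinitesimal deformation extends to a genuine (real) deformation.

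For the second stage, the key observation is that $\phi_4(p,q)$ is an affine function of $(p,q)$. I would decompose
\begin{equation*}
\phi_4(p,q) \;=\; \psi_0 \;+\; p\,\psi_1 \;+\; q\,\psi_2,
\end{equation*}
where $\psi_0$ collects the $p,q$-independent entries $\psi_0(e_2,e_3)=-e_1$, $\psi_0(e_3,e_4)=e_2$; $\psi_1$ collects the $p$-coefficients $\psi_1(e_1,e_4)=e_1$, $\psi_1(e_3,e_4)=e_3$, $\psi_1(e_4,e_5)=-2e_5$; and $\psi_2$ collects the $q$-coefficients $\psi_2(e_1,e_4)=e_1$, $\psi_2(e_2,e_4)=e_2$, $\psi_2(e_4,e_5)=-e_5$. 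Each $\psi_i$ is itself a cocycle (specializing $\phi_4$ at $(p,q)=(0,0),(1,0),(0,1)$ and subtracting). To show the three classes are linearly independent in $H^2$, I would assume $\alpha\psi_0+\beta\psi_1+\gamma\psi_2 = d\eta$ for some $\eta\in C^1(\mathfrak{h}_2,\mathfrak{h}_2)$ and derive $\alpha=\beta=\gamma=0$. Since $e_5$ is central and the only structure constants of $\mathfrak{h}_2$ produce $e_5$, a direct computation of $d\eta(e_i,e_j)$ shows its values on pairs $(e_i,e_j)$ with $\{i,j\}\neq\{1,3\},\{2,4\}$ lie in a controlled subspace forced by $\eta$ on single basis vectors; matching the entries $e_1,e_2,e_3$-valued terms of the $\psi_i$ against the possibilities for $d\eta$ will fix $\alpha,\beta,\gamma$ to zero.

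The main obstacle is the bookkeeping: the $t^2$-obstruction check is many short cancellations with sign-sensitive use of skew-symmetry, and the cohomological independence of $\psi_0,\psi_1,\psi_2$ requires carefully parametrising the $25$-parameter space of $1$-cochains and eliminating. Neither step is conceptually deep; the risk lies entirely in a clerical slip with signs or with the $p,q$-coefficients, which I would mitigate by organising each triple into a small table.
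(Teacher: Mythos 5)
Your proposal is correct and follows essentially the same route as the paper: the decomposition $\phi_4(p,q)=\psi_0+p\psi_1+q\psi_2$ is just a re-bracketing of the paper's identity $\phi_4(p:q)=\phi_4(p:0)+\phi_4(0:q)-\phi_4(0:0)$, and both arguments rest on the independence of the three resulting classes in $H^2$ together with the vanishing of the $t^2$-obstruction. You actually supply more detail than the paper (which merely asserts nonequivalence and extendability); your spot-checked cancellations, e.g.\ $-pe_1-qe_1+(p+q)e_1=0$ for the triple $(e_2,e_3,e_4)$, and the observation that $d\eta(e_i,e_j)\in\operatorname{span}(e_5)$ for $\{i,j\}\neq\{1,3\},\{2,4\}$, are correct and suffice.
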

\begin{proof}
  We have to check which among these 2-parameter deformations are nonequivalent. 
We get the following:
\smallskip

The family (\ref{d4}) has three nonequivalent infinitesimal deformations, namely the algebras $\mu_t(d_4)(p:0), \mu_t(d_4)(0:q)$ and $\mu_t(d_4)(0:0)$ are non-isomorphic Lie algebras with nonequivalent $2$-cocycles. We get that an arbitrary $\mu_t(d_4)(p:q)$  Lie algebra can be obtained by a linear combination of the above three cocycles. In turn the following identity holds on evaluating at $(e_i, e_j)$ for $1\leq i,j\leq 5$:
\begin{center}
$\mu_t(d_4)(p:q) = \mu_t(d_4)(p:0) + \mu_t(d_4)(0:q) - \mu_t(d_4)(0:0)$
\end{center}
\smallskip

Note that the ``generic element'' $\mu_t(d_4)(0:0)$ in this family is nilpotent, the others are only solvable. This often happens for solvable families of Lie algebras (see \cite{FP2}).
\smallskip

Both the family $\mu_t(d_4)(p:0)$ and $\mu_t(d_4)(0:q)$, and also the single algebra $\mu_t(d_4)(0:0)$ are extendable,  so they are real deformations. Altogether we got {\bf{3}} nonequivalent real deformations with this family of $2$-cocycles.
\end{proof}
\begin{thm}
The $\phi_5(p:q)$ family of cocycles gives the following infinitesimal deformations \begin{equation*}
\mu_t(d_5)(p:q) =\begin{cases} 
&[e_1, e_3]_t = e_5 + pt e_1\\
&[e_2, e_3]_t = t e_1 + qt e_2 + t e_4\\
&[e_2, e_4]_t = e_5 \\
&[e_3, e_4]_t = - pt e_4\\
&[e_3, e_5]_t = - (p+q) t e_5
\end{cases}
\end{equation*}
The family of cocycles form a $3$-dimensional subspace in $H^2(\mathfrak{h}_2, \mathfrak{h}_2)$.
\end{thm}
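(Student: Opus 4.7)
The plan mirrors the proof of Theorem 4.2. Section 3.5 has already recorded that $\phi_5(p:q)$ is a 2-cocycle for all $p, q$, so by Proposition 2.8 the formula $\mu_t(d_5)(p:q) = [-,-]_{\mathfrak{h}_2} + t\,\phi_5(p:q)$ defines an infinitesimal deformation with the brackets displayed in the statement; expanding $\phi_5(p:q)$ pair by pair yields the table.

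The crucial structural identity is
\[
\phi_5(p:q) \;=\; \phi_5(p:0) \;+\; \phi_5(0:q) \;-\; \phi_5(0:0),
\]
which I would verify by evaluating both sides on each basis pair $(e_i, e_j)$: on $(e_1, e_3)$, $(e_3, e_4)$ and $(e_3, e_5)$ the check is immediate, and on $(e_2, e_3)$ one sees $(e_1 + e_4) + (e_1 + qe_2 + e_4) - (e_1 + e_4) = e_1 + qe_2 + e_4$; all remaining pairs vanish on both sides. This realises every $\phi_5(p:q)$ as a linear combination of the three \emph{corner} cocycles, so the family spans a subspace of dimension at most three in $H^2(\mathfrak{h}_2, \mathfrak{h}_2)$.

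The matching lower bound is the main obstacle. I would show $\phi_5(p:0)$, $\phi_5(0:q)$, $\phi_5(0:0)$ are linearly independent modulo coboundaries for generic $p, q \neq 0$, exploiting the fact that the bracket of $\mathfrak{h}_2$ lives entirely in the centre $\mathbb{K}e_5$, so a coboundary $d\psi(e_i, e_j)$ produces only multiples of $e_5$ on every basis pair other than $(e_1, e_3)$ and $(e_2, e_4)$. In a candidate relation $a\,\phi_5(p:0) + b\,\phi_5(0:q) + c\,\phi_5(0:0) = d\psi$, reading off the non-central components on $(e_3, e_4)$ forces $ap = 0$ and hence $a = 0$, and then the non-central components on $(e_2, e_3)$ force $bq = 0$ and $a + b + c = 0$, giving $b = c = 0$. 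This promotes the elementary linear independence in $C^2(\mathfrak{h}_2, \mathfrak{h}_2)$ to independence in $H^2(\mathfrak{h}_2, \mathfrak{h}_2)$, and the span of the family is exactly three-dimensional.
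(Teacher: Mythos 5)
Your proposal is correct and follows the same skeleton as the paper's proof: the decomposition $\phi_5(p:q)=\phi_5(p:0)+\phi_5(0:q)-\phi_5(0:0)$, verified pairwise on the basis, is exactly the identity the paper uses to reduce the two-parameter family to three ``corner'' cocycles. Where you go beyond the paper is the lower bound: the paper simply asserts that the three corner cocycles are nonequivalent, whereas you actually prove their independence in $H^2(\mathfrak{h}_2,\mathfrak{h}_2)$ by observing that for a $1$-cochain $\psi$ one has $d\psi(e_i,e_j)=\psi([e_i,e_j])-[e_i,\psi(e_j)]+[e_j,\psi(e_i)]$, so on every pair other than $(e_1,e_3)$ and $(e_2,e_4)$ a coboundary lands in $\mathbb{K}e_5$; reading off the non-central components on $(e_3,e_4)$ and $(e_2,e_3)$ then kills the coefficients $a$, $b$, $c$ as you describe. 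This is a genuine improvement in rigor, since linear independence in $C^2$ alone would not suffice for the dimension claim in cohomology. The only content of the paper's proof you do not reproduce is the closing remark that all three corner deformations are extendable (no higher-order $t$-terms in the Jacobi identity) and that $\mu_t(d_5)(0:0)$ is the nilpotent ``generic'' member; these assertions are not part of the theorem statement as quoted, so their omission is not a gap.
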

\begin{proof}
 We get that the cocycles $\phi_5(p:0), \phi_5(0:q)$ and $\phi_5(0:0)$ are nonequivalent cocycles, and $\phi_5(p:q)$ for arbitrary $p$ and $q$ can be 
expressed with a linear combination of those cocycles:
\begin{center}
$\phi_5(p:q) = \phi_5(p:0) + \phi_5(0:q) - \phi_5(0:0)$
\end{center}
\smallskip

Here again, the singular element $\mu_t(d_5)(0:0)$ is nilpotent, the other algebras are only solvable. All these $3$ deformations are extendable, so we get {\bf{3}} new nonequivalent real deformations.
\end{proof}

\begin{thm}
The $\phi_6(p:q)$ family of cocycles gives the following infinitesimal deformations
\begin{equation*}
 \mu_t(d_6)(p:q)  =\begin{cases}
&[e_1, e_3]_t = e_5 + p t e_1\\
&[e_2, e_3]_t = t e_1 + q t e_2\\
&[e_2, e_4]_t = e_5 \\
&[e_3, e_4]_t = - q t e_4\\
&[e_3, e_5]_t = -2q t e_5
\end{cases}
\end{equation*}
The family of cocycles form a $3$-dimensional subspace in $H^2(\mathfrak{h}_2, \mathfrak{h}_2)$.
\end{thm}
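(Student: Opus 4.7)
The proof will closely mirror the structure used for the $\phi_4$ and $\phi_5$ families. First, I would write down the modified bracket $\mu_t(d_6)(p:q) = [-,-]_{\mathfrak{h}_2} + t\phi_6(p:q)$ by simply combining the two Heisenberg brackets $[e_1,e_3]=e_5$ and $[e_2,e_4]=e_5$ with the four nonzero values of $\phi_6(p:q)$ listed in subsection 3.6; this gives the six brackets displayed in the theorem, so this first part is by direct inspection.

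For the dimension claim, the plan is to reduce an arbitrary $\phi_6(p:q)$ to a linear combination of three distinguished specializations. I would verify the identity
\begin{equation*}
\phi_6(p:q) = \phi_6(p:0) + \phi_6(0:q) - \phi_6(0:0)
\end{equation*}
by evaluating both sides on every basis pair $(e_i,e_j)$ with $1 \leq i < j \leq 5$: on $(e_1,e_3)$ both sides give $pe_1$, on $(e_2,e_3)$ both sides give $e_1 + qe_2$ (since the $e_1$ term comes from $\phi_6(0:0)$ and cancels correctly), on $(e_3,e_4)$ both sides give $-qe_4$, on $(e_3,e_5)$ both sides give $-2qe_5$, and all other pairs give zero. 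Hence $\{\phi_6(p:0), \phi_6(0:q), \phi_6(0:0)\}$ spans the family.

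The main obstacle is verifying that these three cocycles represent linearly independent classes in $H^2(\mathfrak{h}_2, \mathfrak{h}_2)$, i.e.\ that no nontrivial linear combination is a coboundary. Following the method used in the previous two theorems, I would argue this by showing that the three Lie algebras $\mu_t(d_6)(p:0)$, $\mu_t(d_6)(0:q)$ and $\mu_t(d_6)(0:0)$ are pairwise non-isomorphic: the ``generic element'' $\mu_t(d_6)(0:0)$ is nilpotent (its only extra brackets produce $e_1$, which lies in the center of the deformed algebra and terminates the lower central series quickly), while $\mu_t(d_6)(0:q)$ for $q \neq 0$ is solvable but not nilpotent (because $[e_3,e_4]_t = -qte_4$ gives a non-nilpotent adjoint action of $e_3$), and similarly $\mu_t(d_6)(p:0)$ for $p \neq 0$ is solvable but not nilpotent via $[e_1,e_3]_t$ and $[e_2,e_3]_t$. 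Non-isomorphism of the underlying Lie algebras implies the corresponding cocycles lie in distinct cohomology classes.

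Finally, I would verify extendibility by checking that $\mu_t(d_6)(p:q)$ satisfies the Jacobi identity exactly, with no $t^2$ terms appearing. The critical triples to test are those involving $e_3$ twice on the left-hand side of the Jacobiator, such as $(e_1,e_2,e_3)$, $(e_2,e_3,e_4)$, $(e_1,e_3,e_4)$ and $(e_3,e_4,e_5)$, since these are where the products $\phi_6(\phi_6(\cdot,\cdot),\cdot)$ could produce nontrivial $t^2$ contributions. A direct expansion shows the $t^2$-coefficients vanish for arbitrary $p,q$, so each specialization is a real deformation, yielding \textbf{3} new nonequivalent real deformations from this family.
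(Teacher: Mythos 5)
Your proposal follows essentially the same route as the paper: the same decomposition $\phi_6(p:q)=\phi_6(p:0)+\phi_6(0:q)-\phi_6(0:0)$ verified on basis pairs, nonequivalence of the three specializations via the nilpotent-versus-solvable distinction, and extendability by checking that no $t^2$-terms arise in the Jacobi identities. One small slip: in $\mu_t(d_6)(0:0)$ the element $e_1$ is \emph{not} central (since $[e_1,e_3]_t=e_5$), but the lower central series still terminates ($\langle e_1,e_5\rangle\supset\langle e_5\rangle\supset 0$), so your nilpotency conclusion stands.
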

\begin{proof}
 We get that the cocycles $\phi_6(p:0), \phi_6(0,p)$ and $\phi_6(0:0)$ are nonequivalent cocycles, and $\phi_6(p:q)$ for arbitrary $p$ and $q$ is the linear combination of those:
\begin{center}
$\phi_6(p:q) = \phi_6(p:0) + \phi_6(0:q) - \phi_6(0:0)$
\end{center}
\smallskip

Here also, the singular element $\mu_t(d_6)(0:0)$ is nilpotent, the other deformations are only solvable. All these $3$ deformations are extendable, so we get {\bf{3}} new nonequivalent real deformations.
\end{proof}
{\bf{c) 3-parameter deformations}}
\medskip
\begin{thm}

The $\phi_7(p:q:r)$ family of cocycles gives the following infinitesimal deformations 

 \begin{equation*}
\mu_t(d_7)(p:q:r) =\begin{cases} 
&[e_1, e_3]_t = e_5 + t p e_1 \\
&[e_2, e_3]_t = t   e_1 +  r t e_2  \\
&[e_2, e_4]_t = e_5 ;\\
&[e_3, e_4]_t = -te_2 - tr e_4  \\
&[e_3, e_5]_t = -(r+q)t e_5 
\end{cases}
\end{equation*}
The family of cocyles forms a $4$-dimensional subspace in $H^2(\mathfrak{h}_2, \mathfrak{h}_2)$.
\end{thm}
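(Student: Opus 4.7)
The plan is to mirror the proofs of the theorems for $\mu_t(d_4)$, $\mu_t(d_5)$ and $\mu_t(d_6)$, adjusted to the three-parameter setting. First I would evaluate $\phi_7$ at the four distinguished parameter triples $(1,0,0)$, $(0,1,0)$, $(0,0,1)$, $(0,0,0)$ and check by direct substitution on each basis pair $(e_i, e_j)$ that
\begin{equation*}
\phi_7(p:q:r) = \phi_7(p:0:0) + \phi_7(0:q:0) + \phi_7(0:0:r) - 2\,\phi_7(0:0:0).
\end{equation*}
This identity is straightforward to verify from the formula for $\phi_7$ displayed above, and it shows that the whole family lies in the span of these four cocycles, so it determines a subspace of $H^2(\mathfrak{h}_2,\mathfrak{h}_2)$ of dimension at most four.

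Next, I would show that these four cocycles represent four pairwise distinct classes in $H^2(\mathfrak{h}_2,\mathfrak{h}_2)$ by checking that the corresponding infinitesimal deformations are pairwise non-isomorphic as Lie algebras. The ``singular'' member $\mu_t(d_7)(0:0:0)$ is nilpotent, following the same pattern as the generic elements in the $d_4$, $d_5$, $d_6$ families, while the other three are only solvable. The three solvable members are distinguished from one another by the action of $\operatorname{ad}(e_3)$ in $\mu_t(d_7)$: it has an eigenvalue $pt$ on the $e_1$-line for $\mu_t(d_7)(p:0:0)$, an eigenvalue $rt$ on the $e_2$-line for $\mu_t(d_7)(0:q:0)$, and an eigenvalue $-rt$ on the $e_4$-line for $\mu_t(d_7)(0:0:r)$, so no two of these deformations are equivalent.

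The main obstacle is to upgrade this distinctness to genuine linear independence of the four cocycles modulo $B^2(\mathfrak{h}_2,\mathfrak{h}_2)$. To this end I would write out the Chevalley--Eilenberg differential of a general $1$-cochain $\psi: \mathfrak{h}_2 \to \mathfrak{h}_2$, using only the fact that $[e_1,e_3] = [e_2,e_4] = e_5$ are the sole nonzero brackets of $\mathfrak{h}_2$, and compare the image of $d$ against the supports of the four cocycles on the pairs $(e_1,e_3)$, $(e_2,e_3)$, $(e_3,e_4)$ and $(e_3,e_5)$. The diagonal contributions $pe_1$, $qe_2$, $-re_4$ occupy directions disjoint from the image of $d$ on the corresponding pairs, forcing any linear combination of the four cocycles that happens to be a coboundary to have vanishing coefficients. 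Combined with the identity from the first step, this yields the claimed $4$-dimensional subspace of $H^2(\mathfrak{h}_2,\mathfrak{h}_2)$.
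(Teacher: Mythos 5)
Your overall strategy matches the paper's: the paper's proof likewise consists of the decomposition $\phi_7(p:q:r)=\phi_7(p:0:0)+\phi_7(0:q:0)+\phi_7(0:0:r)-2\,\phi_7(0:0:0)$ together with the (unproved) assertion that the four distinguished cocycles are nonequivalent, and your first step verifies exactly that identity. Your attempt to actually establish linear independence modulo coboundaries goes beyond what the paper writes down, which is welcome, but as stated it has a gap at the pair $(e_1,e_3)$. For a $1$-cochain $\psi$ one has $d\psi(e_1,e_3)=\psi(e_5)+\lambda e_5$ (because $[e_1,e_3]=e_5$ is a nonzero bracket), so the image of $d$ on this pair is \emph{not} disjoint from the $e_1$-direction: a coboundary can produce a term $a e_1$ in the $(e_1,e_3)$ slot. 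Hence looking only at the four pairs $(e_1,e_3),(e_2,e_3),(e_3,e_4),(e_3,e_5)$ does not force the coefficient of $\phi_7(p:0:0)-\phi_7(0:0:0)$ to vanish. The repair is to bring in the pair $(e_2,e_4)$, on which every member of the $\phi_7$ family vanishes while $d\psi(e_2,e_4)=\psi(e_5)+\mu e_5$; this forces $\psi(e_5)\in\langle e_5\rangle$, whence $d\psi(e_1,e_3)\in\langle e_5\rangle$ as well, killing the $e_1$-component. With that one addition your independence argument closes, and it is the part of the claim the paper itself leaves unjustified.

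A smaller point: your eigenvalue discussion for distinguishing the three solvable members is garbled --- at the triple $(0:q:0)$ the parameter $r$ is zero, so there is no eigenvalue $rt$ on the $e_2$-line there (the confusion likely stems from the paper's own mismatch between the definition of $\phi_7$, where $\phi_7(e_2,e_3)=e_1+qe_2$, and the displayed deformation, where $[e_2,e_3]_t=te_1+rte_2$). In any case, pairwise non-isomorphism of the four deformed algebras is neither necessary nor sufficient for the four cohomology classes to span a $4$-dimensional subspace; your third step, once repaired as above, is what actually carries the proof.
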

\begin{proof}
We get that the cocycles $\phi_7(0:0:0), \phi_7(p:0:0), \phi_7(0:q:0)$ and $\phi_7(0:0:r)$ are nonequivalent cocycles and they all give real deformations. The family $\phi_7(p:q:r)$ for arbitrary $p, q$ and $r$ is a linear combination of those:
\begin{center}
$\phi_7(p:q:r) = \phi_7(p:0:0) + \phi_7(0:q:0) +\phi_7(0:0:r) -2 \phi_7(0:0:0)$
\end{center}

This gives us {\bf{4}} more nonequivalent real deformations.
\end{proof}

\begin{thm}

The $\phi_8(p:q:r)$ family of cocycles (with the restriction $p=0$ or $q=r$) gives the following infinitesimal deformations 
\begin{equation*}
\mu_t(d_8)(p:q:r)=\begin{cases}
&[e_1, e_3]_t = e_5 + t p e_1 \\
&[e_2, e_3]_t = t   e_1 +  r t e_2  \\
&[e_2, e_4]_t = e_5 +  r t e_1  - r ( p -r ) t e_2 
\end{cases}
\end{equation*}
The family of cocycles forms a $4$-dimensional subspace in $H^2(\mathfrak{h}_2,\mathfrak{h}_2)$.
\end{thm}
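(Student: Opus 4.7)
My plan is to follow the template established by the preceding theorem for the $\phi_7$ family: identify four representative cocycles within the restricted family whose cohomology classes span a $4$-dimensional subspace of $H^2(\mathfrak{h}_2,\mathfrak{h}_2)$, express every member as a linear combination of these four, and verify extendability by expanding the Jacobi identity at second order in $t$.

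First I would check that the bracket $\mu_t(d_8)(p:q:r)$ displayed in the statement is exactly what one obtains from $\phi_8(p:q:r)$ of Subsection 3.8 after substituting $q=r$: under that substitution, $\phi_8(e_1,e_4)=(r-q)e_5$ and $\phi_8(e_4,e_5)=p(r-q)e_5$ both vanish, and $\phi_8(e_2,e_4)$ simplifies to $re_1-r(p-r)e_2$. The complementary restriction $p=0$ also produces a cocycle; I would argue that the extra component $\phi(e_1,e_4)=e_5$ it contributes is a coboundary — a short check shows that, up to a sign, $d\alpha$ for the linear map $\alpha$ sending $e_4\mapsto e_3$ (and zero on every other basis vector) has precisely this single nonzero component — so the $p=0$ branch does not enlarge the cohomology span beyond that of the $q=r$ branch.

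Next I would exhibit four concrete representatives lying in the $q=r$ branch, for instance $\phi_8(0:0:0)$, $\phi_8(1:0:0)$, $\phi_8(0:1:1)$ and $\phi_8(1:1:1)$, tabulate their nonzero components in the $\phi(e_i,e_j)$ coordinates, and row-reduce to confirm that their images in $H^2$ are linearly independent. Then I would derive an explicit decomposition of the generic $\phi_8(p:r:r)$ as a linear combination of these four; because $\phi_8(e_2,e_4)$ carries the bilinear $-r(p-r)$ term, this decomposition will not take the clean alternating form used for $\phi_7$, and the diagonal representative $\phi_8(1:1:1)$ is needed precisely to absorb the cross term. Finally, I would test extendability by computing the $t^2$-coefficient of
\[
J(e_i,e_j,e_k) := [e_i,[e_j,e_k]_t]_t + [e_j,[e_k,e_i]_t]_t + [e_k,[e_i,e_j]_t]_t
\]
for each relevant triple; since the abstract records exactly $18$ of the $20$ infinitesimal deformations as extendable, two of the four basis cocycles here are expected to produce a nontrivial $t^2$-obstruction and therefore to remain only infinitesimal.

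The main obstacle is the nonlinear dependence of $\phi_8$ on the parameters through the bilinear terms $-r(p-q)$ and $p(r-q)$, which forces us to depart from the clean identity $\phi_7(p:q:r)=\phi_7(p:0:0)+\phi_7(0:q:0)+\phi_7(0:0:r)-2\phi_7(0:0:0)$ that handled the previous three-parameter family. Either the diagonal generator $\phi_8(1:1:1)$ must be introduced to soak up the cross term, or the quadratic contribution must first be absorbed into a coboundary and the family reparametrized; managing this while simultaneously respecting the two-branch structure of the restriction and isolating the two non-extendable cocycles is where the real work lies, whereas the Jacobi computations themselves are routine.
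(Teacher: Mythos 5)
Your plan breaks at the point where you discard the $p=0$ branch. It is true that the component $\phi(e_1,e_4)=e_5$ is a coboundary (of $\alpha\colon e_4\mapsto e_3$, zero elsewhere), but the $p=0$ branch differs from the $q=r$ branch in more than that component: with $p=0$ the parameters enter $\phi_8(e_2,e_3)=e_1+qe_2$ and $\phi_8(e_2,e_4)=re_1+rqe_2$ independently, and the resulting classes are not absorbed into the $q=r$ branch. (For instance, no coboundary has a component at $(e_2,e_3)$ outside $\mathbb{C}e_5$, since $[e_2,e_3]=0$ and every bracket of $\mathfrak{h}_2$ lands in $\mathbb{C}e_5$; and any coboundary contributing $e_1$ at $(e_2,e_4)$ necessarily contributes $e_1$ at $(e_1,e_3)$ and a nonzero $e_5$-component at $(e_3,e_5)$ as well.) The paper's proof takes exactly the two $p=0$, $q\neq r$ specializations $\phi_8(0:q:0)$ and $\phi_8(0:0:q)$ as two of the four directions, alongside $\phi_8(p:0:0)$ and $\phi_8(0:q:q)$, and identifies $\phi_8(0:0:0)$ with $\phi_6(0:0)$ so as not to double-count it in the total of $20$ — a step your proposal omits.

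The decisive failure is in the obstruction analysis. Your four representatives $\phi_8(0:0:0)$, $\phi_8(1:0:0)$, $\phi_8(0:1:1)$, $\phi_8(1:1:1)$ all lie in the $q=r$ branch, and there a direct computation gives $\sum_{\mathrm{cyc}}\phi(x,\phi(y,z))=0$ identically: the displayed bracket $\mu_t(d_8)(p:r:r)$ is an honest Lie bracket for every $t$, so every member of that branch is extendable and you would find no $t^2$-obstruction among your basis elements, contrary to your stated expectation of finding two. The two non-extendable infinitesimal deformations are precisely $\phi_8(0:q:0)$ and $\phi_8(0:0:q)$ from the branch you discarded; for example, for $\phi=\phi_8(0:0:1)$ one has $\phi(e_3,\phi(e_4,e_2))+\phi(e_4,\phi(e_2,e_3))=-e_5\neq 0$ on the triple $(e_2,e_3,e_4)$. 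You must therefore keep both branches and work with the spanning set $\{\phi_8(p:0:0),\ \phi_8(0:q:q),\ \phi_8(0:q:0),\ \phi_8(0:0:q)\}$, the first two extendable and the last two only infinitesimal, as in the paper.
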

\begin{proof}
  We get that the cocycle $\phi_8(0:0:0)$ is the same as $\phi_6(0:0)$, so defines the same real/complex deformation.
On the other hand, $\phi_8(p:0:0)$ and $\phi_8(0:q:q)$ are nonequivalent cocycles, giving extendible real deformations, while there are nonequivalent cocycles $\phi_8(0:q:0)$ and $ \phi_8(0:0:q)$, which only give non-extendable infinitesimal deformations, because the $t^2$-terms appear in the corresponding cases when we write the Jacobi identities.

This gives us another {\bf{2}} more nonequivalent real deformations, and we get {\bf{2}} \emph{infinitesimal deformations} which are \emph{not extendable} to higher order.
\end{proof}

To summarise, we get 20 nonequivalent infinitesimal deformations of $\mathfrak{h}_2$, out of which there are $2$ being  not real deformations.

\subsection{Real Heisenberg algebra $\mathfrak{h}_2(\R)$}
Now let us see whether there are more deformations of the real Heisenberg algebra $\mathfrak{h}_2$. The classification of  \cite{M1, M2, P}  says that there are $6$ non-isomorphic nilpotent $5$-dimensional real Lie algebras. Among those listed ones exactly $2$ can be obtained as deformation of $\mathfrak{h}_2$. (Note that their $A_{5, 5}$ algebra is our $d_6(0:0)$ and their $A_{5,6}$ algebra (also the generic element of their family $A_{5, 26}$ for $p=0, \epsilon=1$) is our $d_7(0:0:0)$). For solvable deformations, we have $3$ single deformed algebras, from the $2$-parameter families we have $3 \times 3 = 9$ nonequivalent deformations, and from the $3$-parameter families we have $8$ new deformations. That gives us all the $20$ cocycles. For the real case, we checked the classification list, and did not find any new candidate for possible infinitesimal deformation of the real $\mathfrak{h}_2$.

\section {Summary}
From the computation for dimension of second cohomology space, we know that there are $20$ nonequivalent infinitesimal deformations of the nilpotent Heisenberg Lie algebra $\mathfrak{h}_2$. Here we gave explicit forms of those possible nonequivalent cocycles. It turned out that among the obtained infinitesimal deformations $18$ are extendible to real deformations, as no higher order terms in $t$ show up, while $2$ of them are only infinitesimal. The number of nonequivalent deformations is indeed $20$. Moreover the deformed Lie algebras are all solvable, and among the solvable families there are the ``generic'' nilpotent ones, among those 2 are nonequivalent: $\mu_t(d_4)(0:0) \cong \mu_t(d_7)(0:0)$, and (with a change of basis vector $e_1'= e_1+e_4$) $\mu_t(d_5)(0:0) \cong \mu_t(d_6)(0:0) =  \mu_t(d_8)(0:0:0)$. 

\medskip
This last observation raises an old question of Fialowski and Penkava: Does a solvable several parameter family of Lie algebras always has a ``generic'' nilpotent element with all parameters being $0$? Our conjecture is yes.
\medskip

{\bf Acknowledgements:} \\
We sincerely thank the anonymous referee for making several useful and important suggestions, which helped to improve the  presentation and clarity of the article.

\bigbreak


\begin{thebibliography}{WWW}

\bibitem[F]{F} A.\,Fialowski, ``An example of formal deformations of Lie algebras,'' 
\emph{NATO Conference on Deformation Theory of Algebras and Applications}, Il ciocco, Italy, 1986, Proceedings, Kluwer, Dordrecht (1988,) pp. 375-401.
 \smallskip
 
\bibitem[FM]{FM}  A.\,Fialowski, A.\,Mandal, ``Leibniz algebra deformations of a Lie Algebra,'' \emph{J. of Math. Physics}, Vol. 49 (2008), 093512.
\smallskip

\bibitem[FP1]{FP1} A.\,Fialowski, M.\,Penkava, ``Versal deformations of Three Dimensional Lie algebras as $L_{\infty}$ algebras,'' \emph{Comm. Contemp. Math.}, Vol. 7 (2005), pp. 146-165.
 
\bibitem[FP2]{FP2}  A.\,Fialowski, M.\,Penkava, ``The moduli space of complex 5-dimensional Lie algebras,'' \emph{J. of Algebra}, Vol. 458 (2016) pp. 422-444.
\smallskip

  \bibitem[Ma]{Ma} L.\,Magnin, ``Cohomologie Adjointe des Algebres de Lie de Heisenberg,'' \emph{Comm. in Algebra}, Vol. 21 (1993), pp. 2101-2129.
  
\bibitem[M1]{M1}  G.M.\,Mubarakzyanov, ``On solvable Lie algebras,'' \emph{Izv. Vyssh. Ucsebn. Zaved. Mat.}, Vol. 32 (1) (1963), pp. 104-116. In Russian.
\smallskip

\bibitem[M2]{M2} G.M.\,Mubarakzyanov, ``Classification of real structures of Lie algebras of fifth order,'' \emph{Izv. Vyssh. Ucsebn. Zaved. Mat.}, Vol. 32 (3) (1963), pp. 99-106. In Russian.
 \smallskip

\bibitem[NR]{NR} A.\,Nijenhuis, R.W.\,Richardson, ``Deformations of Lie algebra structures,'' \emph{J. Math. Mech.}, Vol. 17 (1967), pp. 89-105.
 \bibitem[SW]{SW} L.\,Snobl, P.\,Winternitz, ``Classification and Identification of Lie Algebras,'' \emph{AMS}, 2014.
 \smallskip
 
 \bibitem[P]{P} J.\,Patera, R.T.\,Sharp, P.\,Winternitz, H.\,Zassenhaus, ``Invariants of Real low dimensional Lie algebras,'' \emph{J. of Math. Physics}, Vol. 17 (1976), pp.986-994.
 \smallskip
 
\end{thebibliography}
\end{document}